\newtheorem{theorem}{Theorem}[section]
\newtheorem{lemma}[theorem]{Lemma}
\newtheorem{remark}[theorem]{Remark}
\newtheorem{question}[theorem]{Question}
\newenvironment{definition}[1][Definition]{\begin{trivlist}
\item[\hskip \labelsep {\bfseries #1}]}{\end{trivlist}}
\newenvironment{conjecture}[1][Conjecture]{\begin{trivlist}
\item[\hskip \labelsep {\bfseries #1}]}{\end{trivlist}}
\newenvironment{notation}[1][Notation]{\begin{trivlist}
\item[\hskip \labelsep {\bfseries #1}]}{\end{trivlist}}
\begin{document}

\title{Density of 4-edge paths in graphs with fixed edge density}
\author{D\'{a}niel T. Nagy\footnote{E\"{o}tv\"{o}s Lor\'and University, Budapest. dani.t.nagy@gmail.com}}
\maketitle

\begin{abstract}
We investigate the number of 4-edge paths in graphs with a fixed number of vertices and edges. An asymptotically sharp upper bound is given to this quantity. The extremal construction is the quasi-star or the quasi-clique graph, depending on the edge density. An easy lower bound is also proved. This answer resembles the classic theorem of Ahlswede and Katona about the maximal number of 2-edge paths, and a recent theorem of Kenyon, Radin, Ren and Sadun about $k$-edge stars.
\end{abstract}

\section{Introduction}
The aim of this paper is to asymptotically determine the maximal and minimal number of 4-edge paths in graphs with fixed number of vertices and edges.

The first result of this kind is due to Ahlswede and Katona \cite{ahl}, who described the graphs with a fixed number of vertices and edges containing the maximal number of 2-edge paths. To state this result, we need some simple definitions.

\begin{definition}
The quasi-clique $C_n^e$ is a graph with $n$ vertices and $e$ edges, defined as follows. Take the unique representation
$$e=\binom{a}{b}+b~~~~~~0\le b<a,$$
connect the first $a$ vertices to each other, and connect the $a+1$-th vertex to the first $b$ vertices
\end{definition}

\begin{definition}
The quasi-star $S_n^e$ is a graph with $n$ vertices and $e$ edges, defined as follows. Take the unique representation
$$\binom{n}{2}-e=\binom{p}{2}+q,~~~~~~0\le q<p,$$
connect the first $n-p-1$ vertices with every vertex, and connect the $n-p$-th vertex with the first $n-q$ vertices.
\end{definition}

It is easy to see that $S_n^e$ is isomorphic to the complement of $S_n^{\binom{n}{2}-e}$.

\begin{notation}
The number of 2-edge paths in $C_n^e$ and $S_n^e$ is denoted by $C(n,e)$ and $S(n,e)$ respectively, while the number of $k$-edge stars is denoted by $C_k(n,e)$ and $S_k(n,e)$ respectively.
\end{notation}

\begin{theorem} {\bf (Ahlswede and Katona, 1978, \cite{ahl})} \label{ahlkat}
Let $G$ be a simple graph with $n$ vertices and $e$ edges. Then the number of 2-edge paths in $G$ is at most $\max(C(n,e),~ S(n,e))$.

Furthermore,
$$\max(C(n,e),~ S(n,e))=
\begin{cases}
S(n,e)~~~~~~\textrm{if}~~ 0\le e\le\frac{1}{2}\binom{n}{2}-\frac{n}{2}, \\
C(n,e)~~~~~~\textrm{if}~~ \frac{1}{2}\binom{n}{2}+\frac{n}{2}\le e \le \binom{n}{2}.
\end{cases}$$
\end{theorem}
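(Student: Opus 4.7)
The proof plan is to reduce the problem to optimizing a simple symmetric functional over degree sequences, then to show via edge-swaps that only two structural families can be extremal.

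\textbf{Reformulation.} First I would rewrite the count: the number of 2-edge paths in $G$ equals $P(G)=\sum_{v\in V(G)} \binom{d_v}{2}$, since each 2-edge path is determined by its middle vertex together with a choice of two of its neighbors. Equivalently, $P(G)=\tfrac12(\sum_v d_v^2-2e)$, so maximizing $P$ is the same as maximizing $\sum_v d_v^2$ subject to $\sum_v d_v=2e$ and to the degree sequence being graphical.

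\textbf{Step 1: Shifting to threshold graphs.} The key local move is as follows. Suppose $u,v$ are two vertices with $d_u\ge d_v$ and there exists $w\notin\{u,v\}$ with $vw\in E(G)$ and $uw\notin E(G)$. Replace the edge $vw$ by $uw$. The degrees of $u$ and $v$ become $d_u+1$ and $d_v-1$ while $d_w$ is unchanged, so the change in $P$ is
\[
\binom{d_u+1}{2}+\binom{d_v-1}{2}-\binom{d_u}{2}-\binom{d_v}{2}=d_u-d_v+1\ge 1>0.
\]
Hence any maximizer $G$ must admit no such swap, which forces the neighborhoods of the vertices to form a chain under inclusion. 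In other words, an extremal graph is a threshold graph, describable by a degree sequence $d_1\ge d_2\ge\cdots\ge d_n$ with nested neighborhoods.

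\textbf{Step 2: Among threshold graphs, only $C_n^e$ and $S_n^e$ are extremal.} Here I would use a second ``spreading'' operation on the degree sequence itself. If $d_i\ge d_j\ge d_k\ge d_\ell$ with $i<j<k<\ell$ and one can find an edge present at position $(j,k)$ whose removal and replacement by an edge at a more extreme position (either $(i,k)$ or $(j,\ell)$, depending on which side we push) remains within threshold graphs, then by the same convexity calculation as above, $P$ strictly increases. Iterating this until no such push is possible leaves only two local configurations that are compatible with the threshold structure: the quasi-clique $C_n^e$, in which the degree sequence is pushed to the ``clique'' end, and its complement, the quasi-star $S_n^e$. (Since complementation sends $d_v$ to $n-1-d_v$ and preserves the graphical constraint, these two families are the mirror images of each other.)

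\textbf{Step 3: Comparison.} Finally I would compute $C(n,e)$ and $S(n,e)$ explicitly from the definitions of $C_n^e$ and $S_n^e$ and compare them as functions of $e$. The symmetry $S_n^e\cong\overline{S_n^{\binom{n}{2}-e}}$ already noted in the excerpt suggests that the two curves cross near $e=\tfrac12\binom{n}{2}$, and a direct calculation gives the stated ranges where $S(n,e)\ge C(n,e)$ for $e\le\tfrac12\binom{n}{2}-\tfrac{n}{2}$ and $C(n,e)\ge S(n,e)$ for $e\ge\tfrac12\binom{n}{2}+\tfrac{n}{2}$.

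\textbf{Main obstacle.} Step 1 is essentially routine convexity. The technical heart of the argument is Step 2: one must check that the spreading operation on threshold graphs can always be performed unless the sequence is already one of the two extremal shapes, and that the operation stays inside the class of graphical (indeed threshold) sequences. Handling the boundary cases when $e$ does not factor neatly as $\binom{a}{2}+b$ — i.e., the ``remainder'' vertex — is where one must be most careful to avoid losing the gain.
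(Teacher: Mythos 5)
The paper does not prove this theorem; it is a classical background result of Ahlswede and Katona cited from \cite{ahl}, so there is no in-paper argument to compare against. Evaluating your proposal on its own merits:

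Your Step 1 is correct and is indeed the standard opening move. Replacing an edge $vw$ by $uw$ when $d_u\ge d_v$, $uw\notin E$, $vw\in E$ raises $\sum\binom{d_i}{2}$ by $d_u-d_v+1>0$, so an extremal graph has the nested-neighborhood (vicinal preorder) property and is therefore a threshold graph. Good.

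Step 2 is where the proposal breaks down, and you yourself flag it as the ``technical heart.'' The difficulty is more serious than boundary bookkeeping. First, the two ``pushes'' you describe are not symmetric for the objective: moving an edge from position $(j,k)$ to $(i,k)$ with $d_i\ge d_j$ increases $P$ (it is literally a Step 1 swap), but moving it from $(j,k)$ to $(j,\ell)$ with $d_k\ge d_\ell$ \emph{decreases} $P$ by the same convexity computation, so it does not serve the stated purpose. Second, and more fundamentally, once the graph is already a threshold graph no Step 1 swap is available, so you need a genuinely new operation that moves between distinct threshold graphs with the same $(n,e)$; what that operation is, why it monotonically changes $P$, and why its only fixed points are $C_n^e$ and $S_n^e$, is precisely the content of the Ahlswede--Katona theorem. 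Both $C_n^e$ and $S_n^e$ are local maxima of $P$ among threshold graphs, and the space of threshold graphs with given $(n,e)$ is large, so ``iterate a spreading move until stuck'' cannot, as phrased, single out these two without a substantial argument that you have not supplied. Step 3 (the explicit comparison of $C(n,e)$ and $S(n,e)$ over the two stated ranges) is likewise only asserted; note also that the crossing behavior in the excluded middle band is genuinely complicated (there can be several sign changes), which is exactly why the theorem leaves that band out, so ``a direct calculation'' needs to be spelled out carefully near the boundary.

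In short: Step 1 is a valid and standard reduction, but Steps 2 and 3 as written contain the entire difficulty of the theorem and, in the case of Step 2, a concrete sign error in the claimed monotone move. You would need either to carry out a careful analysis of $\sum\binom{d_i}{2}$ over degree sequences of threshold graphs (which is what Ahlswede and Katona do), or to adapt a continuous/graphon-style reduction like the one this paper uses for the 4-edge path.
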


Roughly speaking, this theorem states that if the edge density if smaller than $\frac{1}{2}$, then the quasi-star is the extremal example, while for higher edge densities the quasi-clique becomes extremal. (The transition between the two cases happens in a nontrivial way.)

Recently, Kenyon, Radin, Ren and Sadun proved a similar result for $k$-edge stars, using the notion of graphons. Translating the result back to language of graphs, we get the following theorem:

\begin{theorem} {\bf (Kenyon, Radin, Ren and Sadun, 2014, \cite{star})} \label{starthm}
Let $G$ be a simple graph with $n$ vertices and $e$ edges, and let $2\le k\le 30$. Then the number of k-edge stars in $G$ is at most $$\max(C_k(n,e),~ S_k(n,e))(1+O(e^{-\frac{1}{2}})).$$
\end{theorem}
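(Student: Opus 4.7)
The plan is to reduce the problem to degree sequences and exploit the convexity of $\binom{x}{k}$. The number of $k$-edge stars in $G$ is exactly $\sum_{i=1}^n \binom{d_i}{k}$, where $d_1,\dots,d_n$ are the vertex degrees, so one seeks to maximize $F(d_1,\dots,d_n)=\sum_i \binom{d_i}{k}$ subject to $\sum d_i=2e$, $0\le d_i\le n-1$, and graphicality of the sequence.

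First I would carry out a compression argument. Because $\binom{x}{k}$ is convex for $k\ge 2$, for any two coordinates $d_i,d_j$ with fixed sum the quantity $\binom{d_i}{k}+\binom{d_j}{k}$ is maximized when one of them is pushed to a boundary value in $\{0,n-1\}$. Iterating this shows that a maximizing sequence has all but at most one $d_i$ equal to $0$ or $n-1$, with a single ``fractional'' vertex absorbing the residue of $2e$ modulo the block sizes. Among graphical sequences of this form, the only two candidates are those of the quasi-clique $C_n^e$ (a clique plus a pendant vertex plus isolated ones) and the quasi-star $S_n^e$ (universal vertices plus vertices connected only to them); the bound $\max(C_k(n,e),S_k(n,e))$ then follows by computing the relevant binomial sums. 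The phase transition between the two extremals is determined by comparing these sums as explicit polynomials in the parameters $a,b$ and $p,q$ of the two constructions, in close analogy with Theorem~\ref{ahlkat}.

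Next I would bound the gap between an arbitrary $G$ and an extremal example. Each step of the compression strictly increases $F$ by an amount that can be estimated from below in terms of the distance from an extremal sequence, so summing these increments quantifies how far $G$'s count of $k$-stars falls below the maximum. Conversely, rounding to a graphical sequence together with the slack introduced by the unique intermediate vertex contributes at most a multiplicative factor of $1+O(e^{-1/2})$, matching the claimed error term.

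The main obstacle, I expect, is the uniformity in $k$ and the appearance of the restriction $k\le 30$. A single vertex of intermediate degree $\Theta(n)$ contributes roughly $n^k/k!$ stars, which for large $k$ is comparable to the bulk contribution from the extremal blocks, so a naive compression argument breaks down once $k$ is too large. Controlling the error uniformly over the stated range of $k$ presumably requires a finer analysis on the space of graphons, as in the Kenyon--Radin--Ren--Sadun paper, and reproducing the specific cutoff $k\le 30$ by purely combinatorial means would be the most delicate step.
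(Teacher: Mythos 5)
This statement is cited by the paper as a background result of Kenyon, Radin, Ren and Sadun; the paper under review does not supply a proof of it, so there is nothing to compare against except the proposal itself.

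Your compression argument has a genuine gap: it ignores graphicality. Convexity of $\binom{x}{k}$ correctly shows that, over the \emph{polytope} $\{0\le d_i\le n-1,\ \sum d_i=2e\}$, the maximum of $\sum_i\binom{d_i}{k}$ is attained at an extreme point where all but one coordinate lies in $\{0,n-1\}$. But such degree sequences are essentially never graphical: if even one vertex has degree $n-1$ it must be adjacent to every other vertex, so no other vertex can have degree $0$. The set of graphical sequences is cut out by the Erd\H{o}s--Gallai inequalities and its extreme points are not of the claimed form. Moreover the conclusion you draw is factually wrong about one of the two extremal constructions: the quasi-clique $C_n^e$ with $e=\binom{a}{2}+b$ has most of its nonzero degrees equal to $a-1\approx\sqrt{2e}$, which for small edge density is nowhere near $n-1$, so compression toward $\{0,n-1\}$ cannot produce it. Your proposed increment estimates inherit the same problem, since they are estimates over a feasible region that is not the one you can actually move in. The graphon machinery in the cited paper is not an optional refinement for uniformity in $k$; it is what replaces the graphicality constraint by a tractable one (symmetric $[0,1]$-valued kernels with prescribed edge density), at the cost of the $1+O(e^{-1/2})$ error. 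The restriction $k\le 30$ then comes from the resulting finite-dimensional extremal problem, not from a breakdown of a compression scheme on degree sequences.
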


The theorem is conjectured to hold for all values of $k$. (The only thing left to prove this, is a complicated extremal value problem.) Similarly to the case of the $2$-edge path, $C_k(n,e)<S_k(n,e)$ if the edge density is small, and $S_k(n,e)<C_k(n,e)$ if it is greater. The point of transition depends on $k$.

Now let us discuss three theorems with just one fixed parameter: the number edges. (So $n$ is not fixed.) We will start with a general theorem of Alon.

\begin{theorem} {\bf (Alon, 1981, \cite{alon1})} \label{alonthm}
Let $N(G,H)$ denote the number of subgraphs of $G$ that are isomorphic to $H$. Assume that $H$ is a single graph that has a spanning subgraph which is the vertex-disjoint union of edges and cycles. Then
$$N(G,H)\le (1+O(e^{-\frac{1}{2}}))N(C_n^e, H).$$
\end{theorem}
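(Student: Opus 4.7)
The plan is to exploit the hypothesis that $H$ has a spanning subgraph $F$ which is a vertex-disjoint union of edges and cycles. Since $V(F)=V(H)$, every injective homomorphism of $H$ into $G$ is also an injective homomorphism of $F$ into $G$, so
\[
N(G,H)\le\frac{|\mathrm{Aut}(F)|}{|\mathrm{Aut}(H)|}\,N(G,F).
\]
The same inequality is essentially tight when $G=C_n^e$ (up to lower-order terms), so it will suffice to upper-bound $N(G,F)$ in terms of $e$ and then match the bound with $N(C_n^e,F)$.

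Next I would write $F=F_1\sqcup\cdots\sqcup F_r$ where each $F_i$ is either an edge or a cycle. Restricting an injective homomorphism of $F$ to each component gives an injective homomorphism of that component, so $\mathrm{Inj}(F,G)\le\prod_i \mathrm{Inj}(F_i,G)$. For an edge, $\mathrm{Inj}(\textup{edge},G)=2e$ exactly. For a cycle of length $k$, I would bound $\mathrm{Inj}(C_k,G)\le\mathrm{Tr}(A_G^k)$ (closed walks of length $k$) and then use the spectral facts $\sum_i\lambda_i^2=2e$ together with $|\lambda_i|\le\lambda_1\le\sqrt{2e}$ to obtain
\[
\mathrm{Tr}(A_G^k)=\sum_i\lambda_i^k\le\lambda_1^{k-2}\sum_i\lambda_i^2\le (2e)^{k/2}.
\]

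On the other hand, $C_n^e$ contains an almost-clique on $a=\lfloor\sqrt{2e}\rfloor+O(1)$ vertices plus a small perturbation, and a direct calculation gives $\mathrm{Inj}(C_k,C_n^e)=a(a-1)\cdots(a-k+1)+O_k(a^{k-1})=(2e)^{k/2}(1-O(e^{-1/2}))$ for $k$ bounded in terms of $H$. Multiplying the per-component bounds and using the fact that $F$ has only finitely many components (depending on $H$), the comparison with $C_n^e$ yields
\[
N(G,H)\le (1+O(e^{-1/2}))\,N(C_n^e,H),
\]
as required.

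The main technical obstacle is accumulating the lower-order errors so that they collapse into a single $(1+O(e^{-1/2}))$ factor. There are two sources to control: the per-cycle gap between the spectral bound $(2e)^{k/2}$ and the quasi-clique count, which is of order $O(k^2/\sqrt{e})$; and the cross-component injectivity correction, i.e.\ the overcount in $\prod_i\mathrm{Inj}(F_i,G)$ relative to $\mathrm{Inj}(F,G)$, which must be shown to be of relative size $O(e^{-1/2})$. Both follow from the observation that the quasi-clique has $\sim\sqrt{2e}$ effective vertices while $F$ contributes only $O(1)$ components and $O(1)$ vertices as $e\to\infty$, but the bookkeeping here is the heart of the argument.
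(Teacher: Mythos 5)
This theorem is cited from Alon's 1981 paper and is not proved in the present paper, so there is no in-paper proof to compare against; what follows is an assessment of your sketch on its own merits.

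Your outline is essentially sound, and the spectral bound on closed walks is indeed the standard engine for this kind of statement. The chain $\mathrm{Inj}(H,G)\le\mathrm{Inj}(F,G)\le\prod_i\mathrm{Inj}(F_i,G)$ is correct, the edge count $2e$ is exact, and the bound $\mathrm{Inj}(C_k,G)\le\mathrm{Tr}(A_G^k)\le\lambda_1^{k-2}\sum_i\lambda_i^2\le(2e)^{k/2}$ goes through, including for odd $k$ since the negative-eigenvalue terms only help. Combining, $\mathrm{Inj}(H,G)\le(2e)^{v(H)/2}$, where $v(H)$ is the number of vertices of $H$.

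That said, you are making the bookkeeping harder than it needs to be: the ``cross-component injectivity correction'' you flag as the heart of the argument can be sidestepped entirely. You never need $\prod_i\mathrm{Inj}(F_i,C_n^e)$ to be close to $\mathrm{Inj}(F,C_n^e)$, nor do you need a per-cycle estimate inside $C_n^e$. It suffices to lower-bound $\mathrm{Inj}(H,C_n^e)$ directly: $C_n^e$ contains a clique on $a\ge\sqrt{2e}-1$ vertices, every injection of $V(H)$ into that clique is a homomorphism of $H$, so $\mathrm{Inj}(H,C_n^e)\ge a(a-1)\cdots(a-v(H)+1)\ge(2e)^{v(H)/2}\bigl(1-O(e^{-1/2})\bigr)$. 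Dividing both the global upper bound and this lower bound by $|\mathrm{Aut}(H)|$ gives $N(G,H)\le(1+O(e^{-1/2}))N(C_n^e,H)$ in one stroke, with no need to pass through $N(G,F)$ or track automorphism ratios for $F$. (One cosmetic caveat: for $e$ small the clique in $C_n^e$ has fewer than $v(H)$ vertices and $N(C_n^e,H)=0$; the statement is to be read asymptotically in $e$, as the $O(e^{-1/2})$ notation already suggests.)
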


It means that for these graphs $H$, the asymptotically extremal example is always the quasi-clique. Note that this theorem can be applied in the case of fixed $n$ and $e$, since the extremal example provided by it is the quasi-clique. (No matter how many vertices we are given, we just have to construct a quasi-clique of $e$ edges.)

Also note that this theorem provides upper bounds for all graphs with a perfect matching, (for example all paths with an odd number of edges) and Hamiltonian graphs (for example complete graphs). In the case of the triangle graph $K_3$, the asymptotically best lower bound was proved by Razborov \cite{raz}.

The problem of finding the maximal number of 4-edge paths in graphs with $e$ edges (and an unlimited number of vertices) was solved by Bollob\'{a}s and Sarkar.

\begin{theorem} {\bf (Bollob\'{a}s and Sarkar, 2003, \cite{sarkar2})}
The number of 4-edge paths among graphs with $e$ edges is maximized by the graph that is obtained by taking the complete bipartite graph $K_{2,\lceil e/2\rceil}$, and deleting an edge if $e$ is odd.
\end{theorem}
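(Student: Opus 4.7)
The plan is to derive an upper bound on $P_4(G)$ in terms of the degree sequence and common neighbourhoods, and then show it is attained (up to a parity correction) exactly by the graph in the statement. Write $f(v)=\sum_{u\in N(v)}(d(u)-1)$ for the number of ordered 2-edge paths ending at $v$ and $g(v)=\sum_{u\in N(v)}(d(u)-1)^2$. Every ordered 4-edge path $v_1 v_2 v_3 v_4 v_5$ decomposes uniquely into an ordered pair of 2-edge paths $v_1 v_2 v_3$ and $v_5 v_4 v_3$ that share only $v_3$. The total number of ordered pairs of 2-paths ending at $v_3$ is $f(v_3)^2$; restricting to $v_2\ne v_4$ (a necessary condition for vertex disjointness) throws out $g(v_3)$ of them, so
\[
2P_4(G)\;\le\;\sum_v \bigl(f(v)^2-g(v)\bigr)\;=\;\sum_{u\ne w}(d(u)-1)(d(w)-1)\,|N(u)\cap N(w)|,
\]
where the second equality follows by interchanging the order of summation. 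For $G=K_{2,m}$ a direct computation gives $2m^2(m-1)$, which matches the true ordered count $2m(m-1)(m-2)$ up to lower-order terms, so the bound is asymptotically tight on $K_{2,\lceil e/2\rceil}$.

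With this reduction in hand, the problem becomes: maximise the above ``weighted common-neighbour'' expression over graphs with $e$ edges. The main step is a shifting argument. Starting from an arbitrary $G$, I would produce a sequence of local edge-swaps, each preserving the edge count while non-decreasing the target sum, that drives $G$ toward the double-star structure in which essentially all the common-neighbour mass concentrates on a single pair of vertices of degree $\approx e/2$ sharing the same leaves. In the limit only the $K_{2,\lceil e/2\rceil}$ configuration (with one edge removed when $e$ is odd) survives. A separate direct computation then verifies that the upper bound is actually attained by this graph, giving the exact identification.

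\emph{The principal obstacle} is executing the shifting cleanly: the objective mixes the global degree sequence with pairwise common-neighbourhood counts, both of which change under an edge swap, so showing that every non-extremal $G$ admits a strictly improving swap requires careful case analysis in the spirit of the degree-sequence manipulations used to prove Theorem~\ref{ahlkat} and Theorem~\ref{alonthm}. A secondary technical point is the parity correction: for odd $e$ the extremal graph is $K_{2,\lceil e/2\rceil}$ minus an edge, and verifying that this really beats both $K_{2,(e-1)/2}$ plus a pendant edge and all other near-extremal candidates needs an additional comparison after the shifting has reduced the problem to a finite search among ``double-star-like'' graphs.
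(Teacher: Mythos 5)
This statement is quoted in the paper from Bollob\'as and Sarkar \cite{sarkar2}; the paper itself contains no proof of it, so your argument has to stand on its own as an \emph{exact} extremal result, and it does not. Your opening reduction is fine as far as it goes: the identity $\sum_v\bigl(f(v)^2-g(v)\bigr)=\sum_{u\ne w}(d(u)-1)(d(w)-1)\,|N(u)\cap N(w)|$ is correct, and so is the inequality $2P_4(G)\le\sum_v\bigl(f(v)^2-g(v)\bigr)$. But this inequality is lossy (the discarded pairs of $2$-paths in which an end vertex of one path collides with a vertex of the other are not accounted for), and your own computation shows the loss is present on the conjectured extremal graph itself: for $K_{2,m}$ the bound gives $2m^2(m-1)$ while the true ordered count is $2m(m-1)(m-2)$. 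Hence your later claim that ``the upper bound is actually attained by this graph'' is false, and maximizing the relaxed weighted-codegree functional cannot, by itself, identify the exact maximizer: the slack is of order $e^2$ and varies from graph to graph, while the competitors one must beat (the book $K_2$ joined to $\bar K_m$, the odd-parity variants, etc.) differ from $K_{2,\lceil e/2\rceil}$ precisely by lower-order amounts. An argument of this shape could plausibly recover the asymptotic statement, but the theorem as stated is exact, so either the error terms must be tracked graph-by-graph or a different, equality-preserving counting must be used.

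The second, independent gap is that the heart of the proof --- showing that every non-extremal graph with $e$ edges admits an improving modification, or otherwise bounding $P_4$ exactly by the value on $K_{2,\lceil e/2\rceil}$ (minus an edge for odd $e$) --- is only announced, not carried out. You say you ``would produce a sequence of local edge-swaps'' and yourself call executing this shifting the principal obstacle; no concrete swap is defined, no monotonicity of the objective under a swap is proved, and the parity analysis for odd $e$ is deferred to ``a finite search'' that is not performed. As it stands the proposal is a (partially self-contradictory) reduction plus a plan, not a proof of the Bollob\'as--Sarkar theorem.
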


Bollob\'as and Sarkar also proved asymptotic results for $2k$-edge paths. \cite{sarkar1} The extremal example in this case is the complete bipartite graph with $k$ vertices in one side. For $2k+1$-edge paths, the asymptotically extremal example is the quasi-clique. It follows from Theorem \ref{alonthm}, and is also proved in \cite{sarkar1}.

Alon had a conjecture for star-forests (vertex-disjoint union of stars), which was partially verified by F\"uredi.

\begin{conjecture} {\bf (Alon, 1986, \cite{alon2})}
Let $H$ be a star-forest. For any $e>0$, the graph maximizing the number of subgraphs isomorphic to $H$ among graphs with $e$ edges is a star-forest.
\end{conjecture}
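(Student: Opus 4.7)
Let $H=K_{1,a_1}\sqcup\cdots\sqcup K_{1,a_r}$ with $a_i\ge 1$, and let $G$ be a graph with $e$ edges maximizing $N(G,H)$. The plan is to show that such a $G$ can be taken to be a star-forest, by combining a majorization argument on the degree sequence with a local surgery argument that removes any non-star-forest structure.

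First I would derive the identity
$$|\mathrm{Aut}(H)|\cdot N(G,H)=\sum_{(v_1,\ldots,v_r)\text{ distinct}}\prod_{i=1}^r\binom{d(v_i)}{a_i}\;-\;E(G,H),$$
where $E(G,H)\ge 0$ is an overlap correction counting ordered center-tuples whose chosen neighborhoods are not leaf-disjoint. The main term is a symmetric polynomial in the degree sequence, and under the constraint $\sum_v d(v)=2e$ it is maximized, via Karamata applied to the convex functions $x\mapsto\binom{x}{a_i}$, by degree sequences concentrated on a few high values together with a tail of $1$'s. Such sequences are realized exactly by star-forests; and in a star-forest the leaves of distinct star-components are automatically pairwise disjoint, so the correction $E(G,H)$ vanishes. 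These two observations together strongly suggest that an extremum is attained on a star-forest.

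The hard part will be making these two mechanisms cooperate when the extremal $G$ is not already a star-forest. A natural local surgery is this: if $G$ is not a star-forest then, after breaking any cycles, it must contain a $3$-edge path on vertices $u,v,w,x$; reroute one of its edges so as to strictly reduce the number of $P_4$-subgraphs, and iterate. To track how $N(G,H)$ changes I would use an inductive recursion that peels off a single star component (fixing its center $v$ and a leaf-set $L\subset N(v)$ with $|L|=a_1$, and then counting copies of $H\setminus K_{1,a_1}$ in $G-\{v\}-L$), together with the inductive hypothesis on the smaller graph. The principal obstacle is that a single surgery perturbs four degrees simultaneously and couples many terms in both the main sum and the correction $E$; controlling the sign of the net change in $N(G,H)$ in every case, for arbitrary star-forests $H$, is precisely the delicate analysis that has kept this conjecture open beyond F\"uredi's special cases.
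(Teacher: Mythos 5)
This statement is not a result of the paper at all: it is Alon's 1986 conjecture, quoted only as background, and it remains open. The paper offers no proof of it; the only progress it cites is F\"uredi's partial result, which needs the extra hypotheses $a_i>\log_2(t+1)$ for every component and $e$ sufficiently large. So there is no proof in the paper to measure your argument against, and any complete ``proof'' here would be a major new theorem rather than a reconstruction.

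Your proposal, as written, does not close the gap --- and you say so yourself: the step where a local surgery on a non-star-forest $G$ must be shown not to decrease (or to strictly increase) $N(G,H)$ is exactly the content of the conjecture, and you leave it as ``the delicate analysis that has kept this conjecture open.'' The surrounding machinery also does less than it appears to. Maximizing the main term $\sum_{(v_1,\dots,v_r)}\prod_i\binom{d(v_i)}{a_i}$ by Karamata over degree sequences with $\sum_v d(v)=2e$ ignores graphicality and, more importantly, does not control the difference of two coupled quantities: the graph maximizing the main term need not maximize the main term minus the overlap correction $E(G,H)$, and the fact that $E$ vanishes on star-forests only helps if you can compare against \emph{all} competitors, which is again the missing step. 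Note also that for components with $a_i=1$ the functions $x\mapsto\binom{x}{a_i}$ are linear, so the convexity mechanism is degenerate precisely in the regime excluded by F\"uredi's hypothesis --- a sign that the majorization heuristic alone cannot carry the general case. In short, this is a reasonable research plan, not a proof, and the statement should be treated as the open conjecture the paper presents it as.
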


\begin{theorem}{\bf (F\"uredi, 1992, \cite{furedi})}
Let $H$ be star-forest consisting of components with $a_1, a_2, \dots a_t$ edges. Assume that $a_i>\log_2(t+1)$ holds for all $1\le i\le t$. Let $e$ be sufficiently large. Then the graph maximizing the number of subgraphs isomorphic to $H$ among those with $e$ edges is a star-forest with $t$ components.
\end{theorem}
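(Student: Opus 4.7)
The plan is to upper-bound the number of $H$-copies in an arbitrary $e$-edge graph $G$ by a quantity that is maximized by a star-forest on exactly $t$ components, and then to verify that the bound is attained (asymptotically) only by such a graph. The natural starting point is the identity
\[
N(G,H) = \frac{1}{|\mathrm{Aut}(H)|} \sum_{(v_1,\dots,v_t)} \prod_{i=1}^{t} \binom{d(v_i)}{a_i} \;-\; E(G),
\]
where the ordered tuple $(v_1,\dots,v_t)$ runs over pairwise distinct candidate centers (one per component of $H$) and $E(G)$ is the positive correction accounting for vertex-overlaps between chosen leaf-sets and other centers or leaves.

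First I would analyze the main term. By convexity of $x\mapsto \binom{x}{a_i}$ together with the degree-sum constraint $\sum_v d(v)=2e$ and the hard cap $d(v)\le e$ available for any simple $e$-edge graph, concentrating degree onto a small set of vertices can only increase $\prod_i \binom{d(v_i)}{a_i}$. A short smoothing argument then shows that, subject to needing $t$ distinct usable centers, the main term is largest when $G$ is itself a star-forest on $t$ components of sizes $b_1\ge\dots\ge b_t$ with $\sum b_i=e$. One then optimizes over the $b_i$ via a discrete exchange argument (swap a unit of degree between $b_i$ and $b_j$ whenever it strictly helps) to pin down the extremal sizes, which also yields the precise form of the maximizer and confirms that \emph{fewer} than $t$ components is impossible (no embedding) and \emph{more} than $t$ is strictly worse.

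The step I expect to be hardest is controlling the error term $E(G)$ and ruling out near-extremal \emph{non}-star-forest competitors. A graph on $e$ edges can have only $O(\sqrt{e})$ vertices of degree $\gtrsim \sqrt{e}$, so heuristically the contribution from picking an extra low-degree center is attenuated by a factor $\sim 2^{-a_i}$ per extra component; meanwhile the combinatorial overhead for placing a star-forest on $t$ stars costs at most a multinomial factor of order $t+1$. The hypothesis $a_i>\log_2(t+1)$ is precisely the threshold $2^{a_i}>t+1$, which is what allows the extremal star-forest to beat any structural competitor once lower-order corrections are absorbed. Making this quantitative for \emph{all} $i$ simultaneously, and showing that $E(G)$ is asymptotically negligible compared with the gap between the optimum star-forest and its nearest non-star-forest rival, is where the bulk of the technical work must go, and is presumably the crux of F\"uredi's argument in \cite{furedi}.
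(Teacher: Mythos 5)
This statement is quoted in the paper as a known result of F\"uredi (reference \cite{furedi}); the paper itself contains no proof of it, so there is nothing internal to compare your argument against. Judged on its own terms, your text is a plan rather than a proof, and the gap is exactly at the point you yourself flag as ``presumably the crux of F\"uredi's argument.'' Concretely, two essential steps are asserted but never carried out. First, the reduction of the main term to a star-forest with exactly $t$ components: the ``short smoothing argument'' is not short, because the product $\prod_i\binom{d(v_i)}{a_i}$ must be maximized subject to the existence of $t$ \emph{distinct} usable centers, and unrestricted degree concentration destroys embeddability (with all mass on one vertex there are no copies of $H$ at all when $t\ge 2$); moreover, ruling out star-forests with \emph{more} than $t$ components is precisely where the hypothesis $a_i>\log_2(t+1)$ must enter through an actual inequality, whereas you only offer the heuristic that each extra component is ``attenuated by a factor $\sim 2^{-a_i}$'' against an ``overhead of order $t+1$,'' with no derivation of either factor and no uniform treatment over all $i$. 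Second, the overlap term $E(G)$ in your opening identity is introduced as an unspecified ``positive correction'' and never bounded, yet the theorem is an exact (not asymptotic) extremal statement for large $e$, so you must show $E(G)$ is smaller than the gap between the optimal star-forest and every competitor, including near-extremal non-star-forest graphs; you acknowledge this but do not do it.

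A correct write-up along your lines would need: (i) a precise counting identity or upper bound for $N(G,H)$ in terms of degrees, with the distinctness and overlap issues handled explicitly; (ii) a genuine exchange argument showing that among $e$-edge graphs the bound is attained only by star-forests, quantifying the loss for any graph with an edge between two non-leaf vertices or with leaves of degree exceeding one; and (iii) the comparison between $t$-component and $(t{+}1)$-component (and more) star-forests in which the condition $2^{a_i}>t+1$ is actually used. As it stands, the proposal identifies the right objects and the right threshold but defers all three of these to the cited source, so it does not constitute a proof.
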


Considering the above results, investigating the number of the 4-edge paths seems to be the "natural" choice in the case of fixed $(n,e)$. In this paper, an asymptotic upper bound will be given to this quantity. Similarly to the case of $k$-edge stars, the asymptotically extremal graphs are the quasi-stars and the quasi-cliques. We will also prove an easy asymptotic lower bound.

\section{Proof of the main result}
\begin{theorem} \label{main}
Let $G$ be a simple graph with $n$ vertices and $e$ edges. Let $c=\frac{2e}{n^2}$. (Then $0\le c \le 1$.) Let $N$ denote the number of 4-edge-paths. Then
$$\frac{1}{2}c^4n^5(1-O(n^{-1}))\le N \le \frac{1}{2}\max((1-\sqrt{1-c})^2((c+1)\sqrt{1-c}+c),~c^\frac{5}{2})n^5.$$
\end{theorem}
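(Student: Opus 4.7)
I will count 4-edge paths via the total number of (ordered) 4-walks $W_{4}:=\mathbf 1^{T}A^{4}\mathbf 1$ (where $A$ is the adjacency matrix of $G$), then correct for walks that revisit a vertex. Each of the $N$ unordered 4-edge paths contributes exactly two walks to $W_{4}$, and the number of degenerate (repeated-vertex) walks is $R=O(n^{4})$: closed 4-walks are $\operatorname{tr}(A^{4})\le(\operatorname{tr}(A^{2}))^{2}=(2e)^{2}=O(n^{4})$, and each of the other repetition patterns ($v_{0}=v_{2}$, $v_{0}=v_{3}$, $v_{1}=v_{3}$, etc.) leaves at most four free vertices and is crudely bounded by $n^{4}$. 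Hence $2N=W_{4}-R$ with $R=O(n^{4})$, and it suffices to estimate $W_{4}$ to leading order.

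\textbf{Lower bound.} Since $W_{4}=\|A^{2}\mathbf 1\|_{2}^{2}$, two applications of Cauchy--Schwarz give
\[W_{4}\ge\frac{(\mathbf 1^{T}A^{2}\mathbf 1)^{2}}{n}=\frac{\bigl(\sum_{v}d_{v}^{2}\bigr)^{2}}{n}\ge\frac{(2e)^{4}}{n^{3}}=c^{4}n^{5},\]
so $N=(W_{4}-R)/2\ge\tfrac{1}{2}c^{4}n^{5}-O(n^{4})=\tfrac{1}{2}c^{4}n^{5}(1-O(n^{-1}))$, which is the claimed lower bound.

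\textbf{Upper bound, reduction.} Write $W_{4}=\sum_{v}h(v)^{2}$, where $h(v):=(A\mathbf d)_{v}=\sum_{u\sim v}d_{u}$ counts 2-walks starting at $v$. Sort the degrees $d_{[1]}\ge d_{[2]}\ge\cdots\ge d_{[n]}$. Since $h(v)$ is a sum of $d_{v}$ values drawn from $\{d_{u}:u\ne v\}$, it is bounded by the sum of the top $d_{v}$ of them:
\[h(v)\le F(d_{v}):=\sum_{i=1}^{d_{v}}d_{[i]},\qquad\text{so}\qquad W_{4}\le\sum_{j=1}^{n}F(d_{[j]})^{2}.\]
Rescale: put $D(x):=d_{[\lceil nx\rceil]}/n$, a decreasing function $[0,1]\to[0,1]$ with $\int_{0}^{1}D=c$, and $\tilde F(t):=\int_{0}^{t}D(s)\,ds$. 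A short computation converts the above into $2N/n^{5}\le\int_{0}^{1}\tilde F(D(x))^{2}\,dx+o(1)$. Plugging in the quasi-clique profile $D=\sqrt{c}\,\mathbf 1_{[0,\sqrt{c}]}$ yields $c^{5/2}$, and the quasi-star profile $D=\mathbf 1_{[0,\beta]}+\beta\,\mathbf 1_{(\beta,1]}$ with $\beta=1-\sqrt{1-c}$ yields $(1-\sqrt{1-c})^{2}((c+1)\sqrt{1-c}+c)$; this verifies that the two candidate values in the theorem are exactly attained by these two graphons, and the rearrangement bound is (asymptotically) tight on both extremal families.

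\textbf{Main obstacle.} The remaining task is the optimisation: to show that no other admissible $D$ exceeds the maximum of these two values. The plan is a bang-bang / extreme-point argument. The admissible set of decreasing functions is convex and monotone, so any maximiser ought to be a step function; a Lagrange-multiplier analysis on two-step profiles $D=a\,\mathbf 1_{[0,p]}+b\,\mathbf 1_{(p,q]}$ (three free parameters after the constraint $ap+b(q-p)=c$) should push every critical point onto one of the two boundary loci $\{b=0\}$ (clique shape) or $\{a=1,\,q=1\}$ (star shape). Ruling out three-or-more-step extremals and handling the transition near the unique crossover density $c^{*}$ where the two candidate values agree is the technically most delicate part, in the same spirit as the extremal analyses in Ahlswede--Katona and in Kenyon--Radin--Ren--Sadun.
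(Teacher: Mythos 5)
Your lower bound argument is complete and is essentially the paper's: the paper writes $N'=\sum_{v_2}\bigl(\sum_{v_i\in N(v_2)}\deg v_i\bigr)^2$ and applies the power-mean inequality twice, which is the same as your two applications of Cauchy--Schwarz to $W_4=\|A^2\mathbf 1\|_2^2$. The $O(n^4)$ accounting of degenerate walks is also the same.

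Your upper bound reduction is genuinely different from the paper's, and valid as far as it goes. You bound $h(v)=\sum_{u\sim v}d_u$ by the sum of the top $d_v$ degrees and pass to a rearranged-degree functional $\int_0^1 \tilde F(D(x))^2\,dx$ with $\tilde F(t)=\int_0^t D$. The paper instead uses the codegree bound $\mathrm{codeg}(v_1,v_3)\le\min(\deg v_1,\deg v_3)$ and optimizes $S(A)=\iint \ell(x)\ell(y)\min(\ell(x),\ell(y))\,dx\,dy$. These are different functionals of the degree profile (for the constant profile $\ell\equiv c$ yours evaluates to $c^4$ while the paper's gives $c^3$), so your route is in fact pointwise tighter in places; both are tight on the two candidate profiles and therefore have the same maximum. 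Each approach trades a different relaxation for a different one-dimensional variational problem.

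However, there is a genuine gap: you do not carry out the optimization. The passage from ``$2N/n^5\le \int_0^1\tilde F(D(x))^2\,dx + o(1)$'' to ``this integral is at most $\max\bigl((1-\sqrt{1-c})^2((c+1)\sqrt{1-c}+c),\,c^{5/2}\bigr)$'' is precisely the content of the paper's Theorem 2.7, which occupies most of the paper: it requires a compactness argument to reduce to step functions, a rearrangement/T-minimization argument to force $A$ to be monotone, a second-variation computation (Lemma 2.8) to eliminate non-$\{0,1\}$ values, and then a careful hand analysis for profiles with $k'\ge 4$, $k'=3$, and $k'=2$ steps, including an explicit polynomial sign check. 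Your ``Main obstacle'' paragraph sketches a bang-bang/Lagrange plan but explicitly defers the heart of the matter (``ruling out three-or-more-step extremals and handling the transition near the crossover density''). Since your functional is not the same as the paper's, none of the paper's computations can be borrowed verbatim; you would need to redo the full variational analysis for $\int \tilde F(D)^2$, and that analysis has not been done. As written, the upper bound is therefore unproved.

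Two minor points that would also need attention in a complete write-up: (i) your bound $h(v)\le\sum_{i=1}^{d_v}d_{[i]}$ silently allows $d_v$ itself to appear among the top $d_v$ degrees, which is harmless for an upper bound but should be said; (ii) the discretization step ``a short computation converts the above into $2N/n^5\le \int_0^1 \tilde F(D(x))^2\,dx + o(1)$'' needs a line of justification that the error incurred by replacing Riemann sums by integrals is uniformly $o(1)$, analogous to the paper's $S(A_G)\ge N'/n^5$ inequality.
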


\begin{proof}
Let $N'$ denote the number of the sequences $\{v_0, v_1, v_2, v_3, v_4\}$ where $v_i$ are (not necessarily different vertices) of $G$ and $v_{i-1}v_i\in E(G)$ for $i=1,\dots 4$. Here, we count every 4-edge path twice (there are two directions). We also count some walks of length 4 with repeated vertices. However, the number of such walks is only $O(n^4)$. Therefore $2N \le N'\le 2N+O(n^4)$, so it suffices to prove
$$c^4\le \frac{N'}{n^5} \le \max((1-\sqrt{1-c})^2((c+1)\sqrt{1-c}+c),~c^\frac{5}{2}).$$

Let us note that $\frac{N'}{n^5}$ is often referred to as the homomorphism density of the 4-edge path $P^4$ in $G$, and denoted by $t(P^4,G)$. (See \cite{lovaszbook} for an overview in the topic of graph homomorphisms.)

First, we prove the lower bound, which is much easier. If we want to select a 4-edge walk, we can start by choosing $v_2$, then $v_1$ and $v_3$ (we have to pick them from $N(v_2)$), and finally $v_0$ and $v_4$ ($\deg(v_1)$ and $\deg(v_3)$ possibilities). So we can write $N'$ as below, and estimate it by using twice that $\displaystyle\sum_{i=1}^m x_i^2\ge \frac{1}{n}\left(\displaystyle\sum_{i=1}^m x_i\right)^2$ holds for all real numbers.

$$N'=\sum_{v_2\in V(G)}\left(\sum_{v_i\in N(v_2)} \deg(v_i) \right)^2\ge \frac{1}{n}\left(\sum_{v_2\in V(G)} \left(\sum_{v_i\in N(v_2)} \deg(v_i)\right)\right)^2=$$

$$\frac{1}{n}\left(\sum_{v_i\in V(G)} \deg(v_i)^2 \right)^2\ge
\frac{1}{n}\left(\frac{1}{n}\left(\sum_{v_i\in V(G)} \deg(v_i)\right)^2 \right)^2=
\frac{1}{n}\left(\frac{1}{n}\left(cn^2\right)^2 \right)^2=c^4n^5.$$

Now we move on to the proof of the upper bound. Let codeg($v,w$) denote the number of common neighbours of the vertices $v$ and $w$. Note that
$$N'=\sum_{v_1, v_3\in V(G)} \deg(v_1)\deg(v_3)\textrm{codeg}(v_1,v_3),$$
since after fixing $v_1$ and $v_3$, we have $\deg(v_1)$ candidates for $v_0$, $\deg(v_3)$ candidates for $v_4$, and $\textrm{codeg}(v_1,v_3)$ candidates for $v_2$. Obviously, $\textrm{codeg}(v_1,v_3)\le \min(\deg(v_1), \deg(v_3))$, therefore
$$N'\le\sum_{v_1, v_3\in V(G)} \deg(v_1)\deg(v_3)\min(\deg(v_1), \deg(v_3)).$$

\begin{definition}
Let $G$ be a simple graph with $n$ vertices labeled $w_1, w_2, \dots w_n$. $A_G:[0,1)^2\rightarrow [0,1]$ is the function that is 1 on all rectangles $[\frac{i-1}{n}, \frac{i}{n})\times [\frac{j-1}{n}, \frac{j}{n})$ where $w_iw_j\in E(G)$, and 0 elsewhere.
\end{definition}

\begin{definition}
Let $A:=[0,1)^2\rightarrow [0,1)$ be an integrable function satisfying $A(x,y)=A(y,x)$ for all $0\le x,y<1$. Then for all $0\le x<1$ let
$$\ell(x)=\int_0^1 A(x,y) \, \mathrm{d}y$$
and let
$$S(A)=\int_0^1 \int_0^1 \ell(x)\ell(y)\min(\ell(x), \ell(y)) \, \mathrm{d}x \mathrm{d}y.$$
\end{definition}

Note that $A_G$ satisfies $\int_0^1 \int_0^1 A_G(x,y) \, \mathrm{d}x \mathrm{d}y=c$ and $A_G(x,y)=A_G(y,x)$. If $x\in [\frac{i-1}{n}, \frac{i}{n})$, then $\ell(x)=\frac{\deg(w_i)}{n}$, so
$$S(A_G)=\frac{1}{n^5}\sum_{1\le i,j \le n} \deg(w_i)\deg(w_j)\min(\deg(w_i), \deg(w_j))\ge \frac{N'}{n^5}.$$

\begin{definition}
Let $0\le c \le 1$. Then let $A_1(c):[0,1)^2\rightarrow [0,1]$ be the function satisfying $A_1(x,y)=1$ if $\min(x,y)<1-\sqrt{1-c}$ and $A_1(x,y)=0$ otherwise. Let $A_2(c):[0,1)^2\rightarrow [0,1]$ be the function satisfying $A_2(x,y)=1$ if $\max(x,y)<\sqrt{c}$ and $A_2(x,y)=0$ otherwise. (It is easy to see that $\int_0^1 \int_0^1 A_i(x,y) \, \mathrm{d}x \mathrm{d}y=c$ holds for $i=1,2$.) See Figure \ref{a1a2}.
\end{definition}

\begin{figure}[h]
\begin{center}
\includegraphics[scale = 0.5] {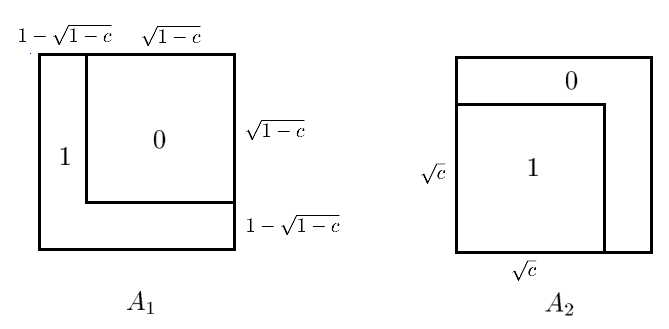}
\end{center}
\caption{The functions $A_1(c)$ and $A_2(c)$.}
\label{a1a2}
\end{figure}

Theorem \ref{main} will be an easy consequence of the following theorem.

\begin{theorem} \label{thm2}
Let $0\le c \le 1$ and $K\in\mathbb{N}^+$ fixed numbers. Assume that $A:=[0,1)^2\rightarrow [0,1)$ is a function satisfying $\int_0^1 \int_0^1 A(x,y) \, \mathrm{d}x \mathrm{d}y=c$ and $A(x,y)=A(y,x)$ for all $0\le x,y<1$, and that there are some numbers $0=q_0<q_1<q_2<\dots < q_K=1$ such that $A$ is constant on $[q_{i-1}, q_i)\times [q_{j-1}, q_j)$ for all $1\le i,j\le K$. Then
$$S(A)\le \max(S(A_1(c)),S(A_2(c)))=\max((1-\sqrt{1-c})^2((c+1)\sqrt{1-c}+c),~c^\frac{5}{2}).$$
\end{theorem}

\begin{proof}
We will use the following notations. $I_i=[q_{i-1},q_i)$, $t_i=q_i-q_{i-1}=|I_i|$, $\ell_i=\ell(x)$ for any $x\in I_i$, $A_{i,j}$ is the value of $A$ in the rectangle $I_i\times I_j$. We will refer to the sets of the form $[0,1)\times I_i$ and $I_i\times [0,1)$ as rows and columns respectively.

The function $S$ is continuous on a compact set defined by the conditions, so its maximum is attained for some $A$. Let $A$ be a function maximizing $S$, and let
$$T(A)=\int_0^1 \int_0^1 \int_0^1 \int_0^1 |A(x_1,y_1)-A(x_2,y_2)| \, \mathrm{d}x_1 \mathrm{d}y_1 \mathrm{d}x_2 \mathrm{d}y_2=\sum_{1\le a_1, a_2, b_1, b_2 \le K} t_{a_1}t_{b_1}t_{a_2}t_{b_2}|A_{a_1,b_1}-A_{a_2,b_2}|.$$

By a similar compactness argument, the minimum of $T$ is also attained for some $A$ (among those that maximize $S$). Such an $A$ can not have four rectangles $I_{i_1}\times I_{j_1}, I_{i_1}\times I_{j_2}, I_{i_2}\times I_{j_1}$ and $I_{i_2}\times I_{j_2}$ satisfying $A_{i_1,j_1}< A_{i_2,j_1}$ and $A_{i_1,j_2}> A_{i_2,j_2}$.

For some $\varepsilon>0$, replace the values $A_{i_1,j_1}, A_{i_2,j_1}, A_{i_1,j_2}$ and $A_{i_2,j_2}$ by $A_{i_1,j_1}+\frac{\varepsilon}{t_{i_1}t_{j_1}}, A_{i_2,j_1}-\frac{\varepsilon}{t_{i_2}t_{j_1}}, A_{i_1,j_2}-\frac{\varepsilon}{t_{i_1}t_{j_2}}$ and $A_{i_2,j_2}+\frac{\varepsilon}{t_{i_2}t_{j_2}}$ respectively. By choosing a small enough $\varepsilon$, the value of $A$ remains greater in $I_{i_2}\times I_{j_1}$ and $I_{i_1}\times I_{j_2}$ than in $I_{i_1}\times I_{j_1}$ and $I_{i_2}\times I_{j_2}$ respectively. Note that such a change does not change the values $\ell(x)$, therefore not changing $S(A)$. (To see that, take a line that intersects two of the four rectangles where the value of $A$ changes. It increases in one of them, while decreasing in the other one. This results in a 0 net change in the integral of $A$ over that line, since if one of the rectangles intersect the line in a segment $\lambda$ times as long as the other one, then its area is $\lambda$ times greater, so the change in the value of $A$ is $\lambda$ times smaller.)

Now we show that the value $T(A)$ decreases during this transformation. $T(A)$ is the sum of differences between the values $A_{i,j}$, weighted with the areas of these rectangles. Assume that the value of $A$ is greater in $r_1$ than in $r_2$ for two rectangles $r_1$ and $r_2$. If we decrease the value of $A$ in a rectangle $r_1$ with $\frac{\varepsilon}{Area(r_1)}$, and increase it in $r_2$ with $\frac{\varepsilon}{Area(r_2)}$ for a small enough $\varepsilon$, then $T(A)$ decreases. To see that, note that
$$Area(r_1)Area(r_2)|A_{r_1}-A_{r_2}|>
Area(r_1)Area(r_2)\left|A_{r_1}-\frac{\varepsilon}{Area(r_1)}-\left(A_{r_2}+\frac{\varepsilon}{Area(r_2)}\right)\right|$$
and for any rectangle $r_3\not\in\{r_1,r_2\}$
$$Area(r_1)Area(r_3)|A_{r_1}-A_{r_3}|+Area(r_2)Area(r_3)|A_{r_2}-A_{r_3}|\ge$$
$$Area(r_1)Area(r_3)\left|A_{r_1}-\frac{\varepsilon}{Area(r_1)}-A_{r_3}\right|+Area(r_2)Area(r_3)\left|A_{r_2}+\frac{\varepsilon}{Area(r_2)}-A_{r_3}\right|.$$
Applying this to $(r_1,r_2)=(I_{i_2}\times I_{j_1}, I_{i_1}\times I_{j_1})$ and $(r_1,r_2)=(I_{i_1}\times I_{j_2}, I_{i_2}\times I_{j_2})$ the desired result follows.

The symmetry of $A$ can be ruined by this transformation, but replacing $A(x,y)$ by $\frac{A(x,y)+A(y,x)}{2}$ for all $0\le x,y\le 1$ fixes this while not increasing $T(A)$ and not changing $S(A)$. (The fact that $T(A)$ does not increase can be verified by the above calculation dealing with the decrease of $A$ in a high-valued rectangle and the its increase in a lower valued one.)

Rearrange the intervals $I_i$ such that $\ell_1\ge \ell_2\ge\dots \ge \ell_K$. The property we just proved for the rectangles implies that for any four rectangles of the form $I_{i_1}\times I_{j_1}, I_{i_1}\times I_{j_2}, I_{i_2}\times I_{j_1}$ and $I_{i_2}\times I_{j_2}$, we have
$$A_{i_1,j_1}< A_{i_2,j_1} \Rightarrow A_{i_1,j_2}\le A_{i_2,j_2}.$$

Now we prove that $A$ is decreasing in both variables. (Since $A(x,y)=A(y,x)$, it is enough to show that for one variable.) Assume to the contrary that for some $i_1<i_2$ and $j$ we have $A_{i_1,j}<A_{i_2,j}$. Then for all $1\le p \le K$ we have $A_{i_1,p}\le A_{i_2,p}$. It results in $\ell_{i_1}<\ell_{i_2}$, a contradiction.

This decreasing property implies that if $\ell_i=\ell_{i+1}$ then $A$ is identical in $I_i\times [0,1)$ and $I_{i+1}\times [0,1)$ so we can merge all such intervals and assume that $\ell_1 > \ell_2 > \dots > \ell_{k}$ for some $k\le K$.

Note that $S(A)$ can be expressed as
\begin{equation}
\label{sa}
S(A)=\sum_{i=1}^k \sum_{j=1}^k t_i t_j \ell_i \ell_j \min(\ell_i, \ell_j).
\end{equation}

Consider an $A$ that meets the theorem's requirements, maximizes $S(A)$ and is decreasing in both variables. We state that there can not be two rectangles in the same row (or column) where the value of $A$ is neither 0 nor 1. Assume that for some $1\le a<b\le k$ and $1\le p\le k$ we have $0<A_{a,p}<1$ and $0<A_{b,p}<1$. Pick some $\epsilon\in\mathbb{R}$ and change $A_{a,p}$ and $A_{p,a}$ to $A_{a,p}+\frac{\varepsilon}{t_a t_p}$ while changing $A_{b,p}$ and $A_{p,b}$ to $A_{b,p}-\frac{\varepsilon}{t_b t_p}$. This transformation changes only two $\ell$ values: $\ell_a$ becomes $\ell_a+\frac{\varepsilon}{t_a}$ and $\ell_b$ becomes $\ell_b-\frac{\varepsilon}{t_b}$. If $|\varepsilon|$ is small enough then $0<A_{a,p}+\frac{\varepsilon}{t_a t_p}, A_{b,p}-\frac{\varepsilon}{t_b t_p}<1$ and the order of the $\ell$ values is preserved. Now we show that $S(A)$ is a strictly convex function of $\varepsilon$ in a neighborhood of 0. Consider the $k^2$ terms in the expression (\ref{sa}). The terms including other terms than $a$ and $b$ are obviously convex functions of $\varepsilon$, since $\varepsilon$ appears at a power of at most 2 in them, and it has a positive coefficient when it has power 2. So the terms of $S(A)$ corresponding to pairs of indices other than $(a,a), (a,b), (b,a)$ and $(b,b)$ are convex functions of $\varepsilon$. All we have to prove is that the sum of the terms corresponding to these four pairs is strictly convex at $\varepsilon=0$.
$$t_a^2\left(\ell_a+\frac{\varepsilon}{t_a}\right)^3+t_b^2\left(\ell_b-\frac{\varepsilon}{t_b}\right)^3+2 t_a t_b \left(\ell_a+\frac{\varepsilon}{t_a}\right)\left(\ell_b-\frac{\varepsilon}{t_b}\right)^2.$$
Differentiating twice with respect to $\varepsilon$ and substituting $\varepsilon=0$, we get
$$6\ell_a-2\ell_b+\frac{4t_a \ell_a}{t_b}.$$
It is positive, because $a<b$ implies $\ell_a>\ell_b$. Therefore $S(A)$ is a strictly convex function of $\varepsilon$ in a neighborhood of 0, so can not have a maximum at 0. This proves that the $A$ under investigation has at most one rectangle in every row and column with a value different from 0 or 1, as depicted in Figure \ref{atmost1}.

\begin{figure}[h]
\begin{center}
\includegraphics[scale = 0.3] {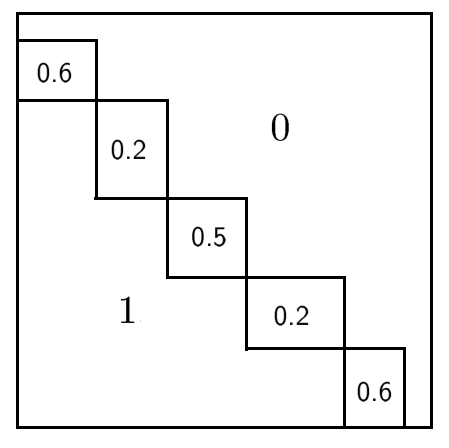}
\end{center}
\caption{Example of a function considered at this point in the proof}
\label{atmost1}
\end{figure}

Now we will prove that actually there are no such rectangles at all. Since $A$ is decreasing in both variables, each row (and column) starts with some 1-valued rectangles, then it might include a single rectangle with value between 0 and 1, then it contains 0-valued rectangles. (Of course, a row or column not necessarily contains all three types of rectangles.) If $A$ has a single rectangle of size $t\times t$ with nonzero value, then $A$'s value is $\frac{c}{t^2}$ there. (So $\frac{c}{t^2}\le 1$.) Then $S(A)=t^2\cdot (t\cdot \frac{c}{t^2})^3=\frac{c^3}{t}\le c^{\frac{5}{2}}$. If there are multiple nonzero-valued rectangles, then $A_{1,1}=1$.

Assume that $0<\lambda=A_{1,j}=A_{j,1}<1$ for some $j$. (Then $j\in\{k-1,k\}$). We will show that it is possible to modify $A$ to increase $S(A)$, so this case is not possible. (From now on we will modify the lengths of the intervals too, not just the value of $A$ in the rectangles.) Divide the interval $I_j$ into two intervals $I_{j'}$ and $I_{j''}$ of length $t_{j'}=\lambda\cdot t_j$ and $t_{j''}=(1-\lambda)\cdot t_j$ respectively. Then divide $I_1\times I_j$ into two rectangles of size $t_1\times t_{j'}$ and $t_1\times t_{j''}$, and set $A$ to be 1 and 0 respectively in them. Modify $A$ in $I_j\times I_1$ similarly to keep $A$ symmetric.

After this modification, we will get $\ell_{j'}=\frac{\ell_j}{\lambda}$ and $\ell_{j''}=0$. This means that the terms with $j''$ can be ignored in (\ref{sa}). The only terms to change in (\ref{sa}) are $t_j$ becoming $t_{j'}=\lambda\cdot t_j$ and $\ell_j$ becoming $\ell_{j'}=\frac{\ell_j}{\lambda}$. Since the power of $\ell_j$ is not smaller than the power $t_j$ in any term of (\ref{sa}), and greater than it in $t_j^2\cdot\ell_j^3$, the value of $S(A)$ increases by this modification. So we can assume that $A$ takes only 0 and 1 values in $I_1\times[0,1)$ and $[0,1)\times I_1$. To show that noninteger values are not possible in the other places, we need a technical lemma.

Note that the variables $t_i$ and $\ell_i$ appearing in the following lemma should be considered real numbers with no connection to any function $A: [0,1)^2\rightarrow [0,1]$, but the same notations are used, since the lemma will be applied in such settings.

\begin{lemma} \label{trlem}
Let $t_1, t_2, \dots, t_k$ be positive reals and let $\ell_1 > \ell_2 > \dots > \ell_{k}\ge 0$. Assume that there is a neighborhood $H$ of $t_\beta$ such that for some $\alpha\in\{\beta-1,\beta+1\}$ and $x\in H$ we can replace the numbers $t_\alpha, t_\beta, \ell_\beta$ by $t_\alpha(x)=t_\alpha+t_\beta-x$, $t_\beta(x)=x$ and $\ell_\beta(x)=\ell_\alpha+\frac{t_\beta}{x}(\ell_\beta-\ell_\alpha)$ respectively, without changing the nonnegativity of the variables and preserving the order of the $\ell$'s. The other variables are left unchanged: $t_i(x)=t_i$, if $i\not\in\{\alpha,\beta\}$ and $\ell_i(x)=\ell_i$, if $i\not=\beta$. (Roughly speaking, this transformation preserves the sum $c=\displaystyle\sum_{i=1}^k t_i(x)\ell_i(x)$, while changing only three of the values: two neighboring $t$'s and the $\ell$ corresponding to one of them.) Then the function
\begin{equation} \label{lemmaeq}
S(x)=\sum_{i=1}^k \sum_{j=1}^k t_i(x) t_j(x) \ell_i(x) \ell_j(x) \min(\ell_i(x), \ell_j(x))
\end{equation}
is strictly convex at $x=t_\beta$. Therefore it has no maximum there.
\end{lemma}

\begin{proof}
Consider the formula (\ref{lemmaeq}) and select all the terms depending on $x$. We can ignore the terms where one of the indices is $\alpha$ or $\beta$, and the other is greater than $\max(\alpha, \beta)$ because
$$(t_\alpha+t_\beta-x)t_p \ell_\alpha \ell_p^2 + x t_p \left(\ell_\alpha+\frac{t_\beta}{x}(\ell_\beta-\ell_\alpha)\right)\ell_p^2
=t_\alpha t_p \ell_\alpha \ell_p^2+t_\beta t_p \ell_\beta \ell_p^2.$$
does not depend on $x$. The sum of the other terms depending on $x$ can be written as $2S_1(x)+S_2(x)$. Here $S_1(x)$ is the sum of the terms corresponding to pairs of indices where one of the elements is $\alpha$ or $\beta$ and the other one in smaller than $\alpha$ and $\beta$. $S_2(x)$ denotes the sum of the terms corresponding to the pairs of indices $(\alpha, \alpha), (\alpha, \beta), (\beta, \alpha)$ and $(\beta, \beta)$.
$$S_1(x)=\left((t_\alpha+t_\beta-x)\cdot \ell_\alpha^2 +
x\cdot \left(\ell_\alpha+\frac{t_\beta}{x}(\ell_\beta-\ell_\alpha)\right)^2\right)\cdot\sum_{p=1}^{\min(\alpha,\beta)-1} t_p\ell_p,$$

$$S_2(x)=(t_\alpha+t_\beta-x)^2\cdot \ell_\alpha^3+x^2\cdot \left(\ell_\alpha+\frac{t_\beta}{x}(\ell_\beta-\ell_\alpha)\right)^3+$$
$$2(t_\alpha+t_\beta-x)x \ell_\alpha \left(\ell_\alpha+\frac{t_\beta}{x}(\ell_\beta-\ell_\alpha)\right) \min\left(\ell_\alpha, \ell_\alpha+\frac{t_\beta}{x}(\ell_\beta-\ell_\alpha)\right).$$

We have to show that $S_1(x)$ and $S_2(x)$ are strictly convex at $x=t_\beta$, therefore $S(x)$ does not takes its maximum there. We will start with $S_1(x)$. We can disregard the constant factor at the right, as it does not change convexity. The left factor can be expressed as $\lambda_2 x^2+\lambda_1 x+\lambda_0+\lambda_{-1} x^{-1}$. Since $\lambda_2=\ell_\alpha^2>0$ and $\lambda_{-1}=t_\beta(\ell_\beta-\ell_\alpha)^2>0$, $S_1$ is strictly convex.

Now we consider $S_2(x)$. First, assume that $\beta=\alpha+1$, and therefore $\ell_\alpha>\ell_\beta$. In this case we have
$$S_2(x)=(t_\alpha+t_\beta-x)^2\cdot \ell_\alpha^3+x^2\cdot \left(\ell_\alpha+\frac{t_\beta}{x}(\ell_\beta-\ell_\alpha)\right)^3+
2(t_\alpha+t_\beta-x)x \ell_\alpha \left(\ell_\alpha+\frac{t_\beta}{x}(\ell_\beta-\ell_\alpha)\right)^2.$$
Differentiating twice by $x$ and setting $x=t_\beta$ we get
$$2t_\beta^{-1}(\ell_\beta-\ell_\alpha)^2\big((\ell_\alpha+\ell_\beta)t_\beta+2\ell_\alpha t_\alpha\big)>0.$$
If $\beta=\alpha-1$, and therefore $\ell_\alpha<\ell_\beta$, we have
$$S_2(x)=(t_\alpha+t_\beta-x)^2\cdot \ell_\alpha^3+x^2\cdot \left(\ell_\alpha+\frac{t_\beta}{x}(\ell_\beta-\ell_\alpha)\right)^3+
2(t_\alpha+t_\beta-x)x \ell_\alpha^2 \left(\ell_\alpha+\frac{t_\beta}{x}(\ell_\beta-\ell_\alpha)\right).$$
Differentiating twice by $x$ and setting $x=t_\beta$ we get
$$2(\ell_\beta-\ell_\alpha)^3>0.$$
In both cases, $S_2(x)$ is strictly convex at $x=t_\beta$. This concludes the proof of the lemma.
\end{proof}

Now we can continue the proof of the theorem. Assume that $A$ is not entirely 0-1 valued. Let $I_p\times [0,1)$ be the first column containing a rectangle with a value different from 0 and 1. We already proved that $p\ge 2$. Let $I_p\times I_q$ the unique rectangle in the $p$-th column with $0<A_{p,q}<1$. Since $A_{p,q}=A_{q,p}$, we know that $p\le q$. We will show that $A$ admits the type of transformation described in Lemma \ref{trlem}, therefore does not maximize $S(A)$. We will describe transformations in each case that change only two neighboring $t$ values and the $\ell$ corresponding to one of them.

{\it Case a}: First, assume that $p<q<k$ and $A_{p-1,q+1}=1$. Then the rows $[0,1)\times I_q$ and $[0,1)\times I_{q+1}$ differ only in the $p$-th column. We can move the point separating the intervals $I_q$ and $I_{q+1}$, keeping $A_{p,q+1}=0$ while adjusting $A_{p,q}$ such that the integral of $A$ over the whole square remains unchanged. We apply these changes to the other side of the main diagonal to keep $A$ symmetric. During this transformation the only $\ell$ value to change is $\ell_q$. So Lemma \ref{trlem} can be applied with $\alpha=q+1, \beta=q$.

\begin{figure}[h]
\begin{center}
\includegraphics[scale = 0.5] {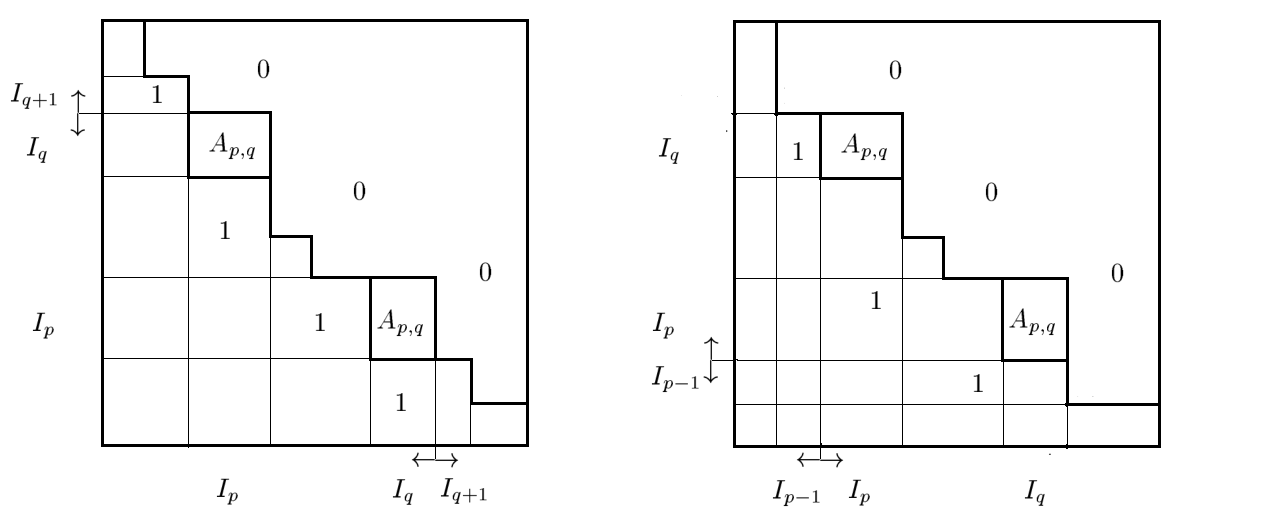}
\end{center}
\caption{Illustration for Case a (left) and Case b (right).}
\end{figure}

{\it Case b}: Now assume that $p<q=k$ or $p<q<k$ and $A_{p-1, q+1}=0$. Then the columns $I_{p-1}\times [0,1)$ and $I_{p}\times [0,1)$ differ only in the $q$-th row. We can move the point separating the intervals $I_{p-1}$ and $I_{p}$, keeping $A_{p-1,q}=1$ while adjusting $A_{p,q}$ such that the integral of $A$ over the whole square stays the same. We apply these changes to the other side of the main diagonal to keep $A$ symmetric. During this transformation the only $\ell$ value to change is $\ell_p$. So Lemma \ref{trlem} can be applied with $\alpha=p-1, \beta=p$.

\begin{figure}[h]
\begin{center}
\includegraphics[scale = 0.5] {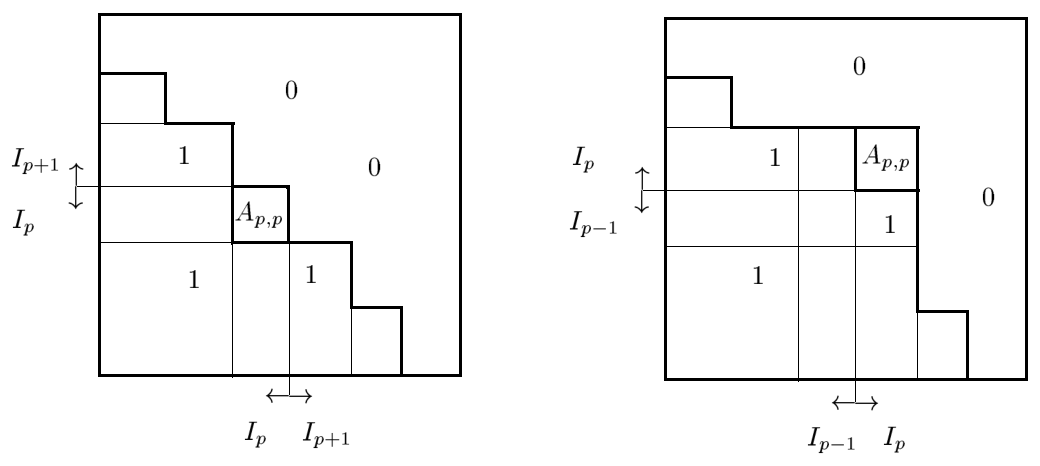}
\caption{Illustration for Case c (left) and Case d (right).}
\end{center}
\end{figure}

{\it Case c}: Assume that $p=q<k$ and $A_{p-1,p+1}=1$. Then the rows $[0,1)\times I_p$ and $[0,1)\times I_{p+1}$ differ only in the $p$-th column. We can move the point separating the intervals $I_p$ and $I_{p+1}$, keeping $A_{p,p+1}=0$ while adjusting $A_{p,p}$ such that the integral of $A$ over the whole square stays the same. (We apply these changes to the the intervals defining the rows and columns simultaneously to keep $A$ symmetric.) During this transformation the only $\ell$ value to change is $\ell_p$. So Lemma \ref{trlem} can be applied with $\alpha=p+1, \beta=p$.

{\it Case d}: Now assume that $p=q=k$ or $p=q<k$ and $A_{p-1, p+1}=0$. Then the columns $I_{p-1}\times [0,1)$ and $I_{p}\times [0,1)$ differ only in the $p$-th row. We can move the point separating the intervals $I_{p-1}$ and $I_{p}$, keeping $A_{p-1,p}=1$ while adjusting $A_{p,p}$ such that the integral of $A$ over the whole square stays the same. (We apply these changes to the the intervals defining the rows and columns simultaneously to keep $A$ symmetric.) During this transformation the only $\ell$ value to change is $\ell_p$. So Lemma \ref{trlem} can be applied with $\alpha=p-1, \beta=p$.

With this, we have covered all the possibilities. From now on, we can assume that $A: [0,1)^2\rightarrow \{0,1\}$.

Since $A$ is 0-1 valued and decreasing in both variables, there exists some $k'\in \{k-1,k\}$ such that $k$ of the $\ell$ values is positive and $A_{i,j}=1$ if and only if $i+j\le k'+1$. Now we will show that $A$ can not maximize $S(A)$ if $k'\ge 4$.

Assume that $k'\ge 4$. It is possible to move the point separating the intervals $I_{k'-1}$ and $I_{k'}$, while keeping $\ell_{k'-1}$ unchanged and adjusting $\ell_{k'}$ to keep the integral over the whole square unchanged. When these changes are applied to the other side of the main diagonal to preserve the symmetry, we se that the point separating $I_1$ and $I_2$ moves, $\ell_1$ remains unchanged, but $\ell_2$ changes. During this transformation, the following values change (see Figure \ref{k4}):

$t_{k'}\rightarrow x,$

$t_{k'-1}\rightarrow t_{k'-1}+t_{k'}-x,$

$\ell_{k'}\rightarrow \ell_{k'-1}+\frac{t_{k'}}{x}(\ell_{k'}-\ell_{k'-1}),$

$\ell_2\rightarrow\ell_1-x,$

$t_2\rightarrow \frac{t_2(\ell_1-\ell_2)}{x},$

$t_1\rightarrow t_1+t_2-\frac{t_2(\ell_1-\ell_2)}{x}.$

\begin{figure}[h]
\begin{center}
\includegraphics[scale = 0.4] {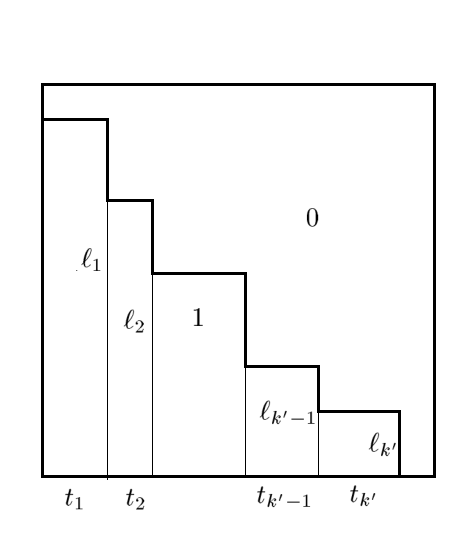}
\caption{The case $k'\ge 4$}
\label{k4}
\end{center}
\end{figure}

Now we investigate how $S(A)$ changes during such a transformation. First, apply the changes only to $t_{k'}, t_{k'-1}$ and $\ell_{k'}$. Lemma \ref{trlem} states that $S$ is now a strictly convex function of $x$. (Because we changed only two neighboring $t$'s and the $\ell$ corresponding to one of them, while preserving the sum $\displaystyle\sum_{i=1}^{k'} t_i\ell_i$.) Now apply the changes to $t_1, t_2$ and $\ell_2$ too. Since $t_1\ell_1+t_2\ell_2$ does not change during the transformation, for any $3\le s \le k'$, the sum of the terms in (\ref{sa}) corresponding to the pairs of indices (1,$s$), ($s$,1), (2,$s$) and ($s$,2) which is
$$2t_1t_s\ell_1\ell_s^2+2t_2t_s\ell_2\ell_s^2=2t_s\ell_s^2(t_1\ell_1+t_2\ell_2),$$
does not change. Therefore it is enough to consider the terms where both indices are 1 or 2.
$$\left(t_1+t_2-\frac{t_2(\ell_1-\ell_2)}{x}\right)^2\ell_1^3+\left(\frac{t_2(\ell_1-\ell_2)}{x}\right)^2(\ell_1-x)^3+
2\left(t_1+t_2-\frac{t_2(\ell_1-\ell_2)}{x}\right)\left(\frac{t_2(\ell_1-\ell_2)}{x}\right)\ell_1(\ell_1-x)^2.$$ 
Differentiating two times by $x$ and setting $x=t_{k'}=\ell_1-\ell_2$ we get $\frac{2t_2^2 \ell_1^2}{t_{k'}}>0$, so the above formula is a strictly convex function of $x$. As the sum of two strictly convex functions, $S$ is a strictly convex function of $x$, therefore $A$ does not maximize $S(A)$.

Now assume that $k'=3$. We state that if $A: [0,1)^2\rightarrow \{0,1\}$ is a symmetric function decreasing in both variables with at most three positive $\ell$ values, then there is another such function $B$ with at most two positive $\ell$ values such that $\int_0^1 \int_0^1 A(x,y) \, \mathrm{d}x \mathrm{d}y=\int_0^1 \int_0^1 B(x,y) \, \mathrm{d}x \mathrm{d}y$ and $S(B)\ge S(A)$.
If $A$ is such a function then $\ell_1=t_1+t_2+t_3$, $\ell_2=t_1+t_2$ and $\ell_3=t_1$. Without loss of generality we can assume that $t_1+t_2+t_3=\ell_1=1$. (Replacing $A(x,y)$ with $A(\lambda x, \lambda y)$ changes $S(A)$ with a factor of $\lambda^5$, so the rescaling does not change our problem.) Note that in this case the two parameters $s=1-\int_0^1 \int_0^1 A(x,y) \, \mathrm{d}x \mathrm{d}y$ and $x=t_3$ are enough two define $A$. (See Figure \ref{k3}.) We have

$\ell_1=1,$

$\ell_2=1-x,$

$t_1=\ell_3=1-\frac{s+x^2}{2x},$

$t_2=\frac{s-x^2}{2x},$

$t_3=x.$

\begin{figure}[h]
\begin{center}
\includegraphics[scale = 0.4] {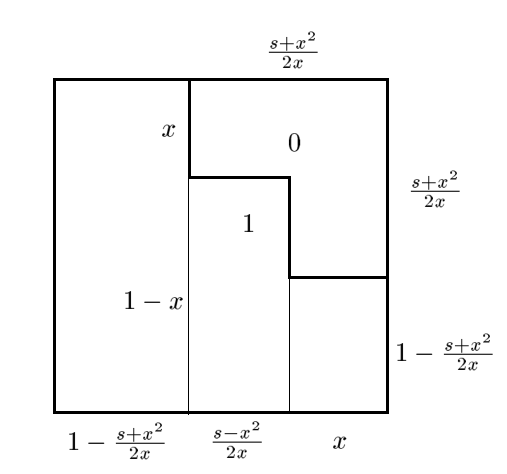}
\caption{The case $k'=3$}
\label{k3}
\end{center}
\end{figure}

Note that $x$ can take any value from $[1-\sqrt{1-s},\sqrt{s}]$. The two endpoints correspond to functions with at most two intervals. (After scaling back, we get a step function with at most two positive $\ell$ values.)

$$S(A)=f(x)=t_1^2\ell_1^3+t_2^2\ell_2^3+t_3^2\ell_3^3+2t_1t_2\ell_1\ell_2^2+2t_1t_3\ell_1\ell_3^2+2t_2t_3\ell_2\ell_3^2=$$
$$\left(1-\frac{s+x^2}{2x}\right)^2+\left(\frac{s-x^2}{2x}\right)^2(1-x)^3+x^2\left(1-\frac{s+x^2}{2x}\right)^3+$$
$$2\left(1-\frac{s+x^2}{2x}\right)\left(\frac{s-x^2}{2x}\right)(1-x)^2+
2\left(1-\frac{s+x^2}{2x}\right)x\left(1-\frac{s+x^2}{2x}\right)^2+
2\left(\frac{s-x^2}{2x}\right)x(1-x)\left(1-\frac{s+x^2}{2x}\right)^2.$$

We will prove that for a fixed $s$, $f(x)$ is either increasing in $[1-\sqrt{1-s},\sqrt{s}]$ or there exists an $x_0\in [1-\sqrt{1-s},\sqrt{s}]$ such that $f(x)$ is strictly decreasing in $[1-\sqrt{1-s},x_0]$ and strictly increasing in $[x_0,\sqrt{s}]$. In both cases, $f(x)$ must take its maximum in one of the endpoints. Differentiate $f(x)$ by $x$. We need that $f'(x)$ is either positive in $(1-\sqrt{1-s},\sqrt{s})$ or it is negative in $(1-\sqrt{1-s},x_0)$ and positive in $(x_0,\sqrt{s})$. Since $x>0$, it is sufficient to prove the same for $f'(x)x$. An elementary calculation shows that $\lim_{x\searrow 0} f'(x)x=-\infty$ and $f'(\sqrt{s})\sqrt{s}=0$. If we could show that $f'(x)x$ is strictly concave in $[0,\sqrt{s}]$, then the desired result would follow. After further calculation
$$4(f'(x)x)''=-50x^3+96x^2+(27s-54)x-16s-3s^2(2-s)\frac{1}{x^3}.$$

We are going to prove that the above formula is negative if $0<x,s<1$. Since $s<1$ it enough to show that
$$g(x,s)=-50x^3+96x^2+(27s-54)x-16s-3s^2\frac{1}{x^3}\leq 0.$$
This is a polynomial of $s$ of degree 2. For a fixed $x$, it takes its maximum at $s=\frac{27x^4-16x^3}{6}$.

If $\frac{27x^4-16x^3}{6}\ge 1$, then $x\ge 0.89$ and
$$g(x,s)\le g(x,1)=-50x^3+96x^2-27x-16-3x^{-3}\le 0.$$

If $\frac{27x^4-16x^3}{6}\le 1$, then $x\le 0.9$ and
$$g(x,s)\le g\left(x,\frac{27x^4-16x^3}{6}\right)=\frac{1}{12}(729x^5-864x^4-344x^3+1152x^2-648x)\le 0.$$

(Both of the above results follow by elementary calculus.) This concludes the proof of the case $k'=3$. We obtained that $S(A)$ is maximized by a function with at most two positive $\ell$ values.

Now we can assume that $k'\le 2$. Then $A$ is completely defined by the parameters $\int_0^1 \int_0^1 A(x,y) \, \mathrm{d}x \mathrm{d}y=c$ and $t_1=x$. (See Figure \ref{k2}.)

$t_1=\ell_2=x,$

$t_2=\frac{c-x^2}{2x},$

$\ell_1=\frac{c+x^2}{2x}.$

\begin{figure}[h]
\begin{center}
\includegraphics[scale = 0.4] {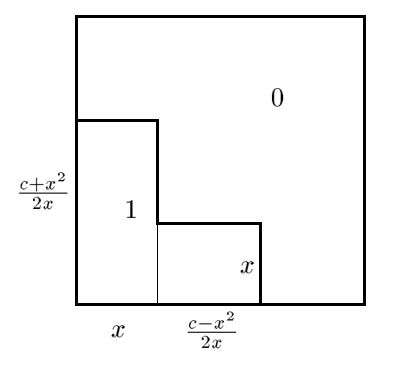}
\caption{The case $k'=2$}
\label{k2}
\end{center}
\end{figure}

Note that $1-\sqrt{1-c} \le x\le \sqrt{c}$, and $x=1-\sqrt{1-c}$ corresponds to $A_1(c)$, while $x=\sqrt{c}$ corresponds to $A_2(c)$. (See Figure \ref{a1a2}.)

$$S(A)=t_1^2\ell_1^3+t_2^2\ell_2^3+2t_1t_2\ell_1\ell_2^2=x^2 \left(\frac{c+x^2}{2x}\right)^3+ \left(\frac{c-x^2}{2x}\right)^2 x^3 + 2x\left(\frac{c-x^2}{2x}\right)\left(\frac{c+x^2}{2x}\right)x^2=$$

$$\frac{1}{8}\left(\frac{c^3}{x}+9c^2x-cx^3-x^5\right).$$

Using the substitution $y=\frac{x}{\sqrt{c}}$ (where $\frac{1-\sqrt{1-c}}{\sqrt{c}} \le y\le 1$) we get
$$S(A)=\frac{c^{\frac{5}{2}}}{8}\left(\frac{1}{y}+9y-y^3-y^5\right).$$

We want to show that this function takes its maximum at one of the endpoints of its domain. It suffices to show that there exists a real number $0<y_0<1$ such that the function $y\rightarrow \frac{1}{y}+9y-y^3-y^5$ is strictly decreasing in $(0,y_0)$ and strictly increasing in $(y_0,1)$.

Differentiating once we get the function $f(y)=-\frac{1}{y^2}+9-3y^2-5y^4$. We need that there is some $0<y_0<1$ such that $f(y)<0$ if $0<y<y_0$ and $f(y)>0$ if $y_0<y<1$. Consider the function $g(y)=f(\sqrt{y})y$. It is obvious that $g$ has the desired property if and only $f$ has it. Since $g(y)=-5y^3-3y^2+9y-1$, a polynomial of degree 3, this property can be verified for $g$ by elementary calculus. With this, Theorem \ref{thm2} is proved.
\end{proof}

With this, we proved that $\frac{N'}{n^5}\le S(A_G)\le \max(A_1(c), A_2(c))$, finishing the proof of the upper bound. Using the formula (\ref{sa}), we find that $A_1(c)=(1-\sqrt{1-c})^2((c+1)\sqrt{1-c}+c)$ and $A_2(c)=c^{5/2}$. This concludes the proof of Theorem \ref{main}. By plotting these two functions we can conclude that there is some $c_0\approx 0.0865$ such that $A_1(c)\ge A_2(c)$ in $[0,c_0]$ and $A_1(c)\le A_2(c)$ in $[c_0,1]$.
\end{proof}

\section{Remarks and open questions}

First, we note that the bounds in Theorem \ref{main} are asymptotically sharp.

\begin{remark}
Let $n$ and $e$ be fixed positive integers satisfying $e\le \binom{n}{2}$. Let $c=\frac{2e}{n^2}$. Then there is a simple graph $G_1$ with $n$ vertices and $e$ edges containing at most $\frac{1}{2}c^4n^5$ 4-edge paths. Additionally, there are simple graphs $G_2$ and $G_3$ with $n$ vertices and $e$ edges that contain at least $\frac{1}{2}c^\frac{5}{2}n^5(1-O(n^{-1}))$ and $\frac{1}{2}(1-\sqrt{1-c})^2((c+1)\sqrt{1-c}+c)n^5(1-O(n^{-1}))$ 4-edge paths respectively.
\end{remark}

\begin{proof}
Let $G_1$ be a graph with $n$ vertices and $e$ edges such that the degree of any two vertices differ by at most 1. (It is well-known that such a graph exists.) Then the degree of any vertex is at most $\frac{2e}{n}+1=cn+1$. So the number of 4-edge paths is at most
$$\frac{1}{2}n(cn+1)(cn)(cn-1)(cn-2)\le\frac{1}{2}n(cn)^4=\frac{1}{2}c^4n^5.$$

Now we show that we can choose the quasi-clique $C_n^e$ for $G_2$. $C_n^e$ contains an $a$-clique, where $a$ is the greatest integer satisfying $\binom{a}{2}\le e$. Therefore $\binom{a+1}{2}\ge e$, implying $a\ge \sqrt{2e}-1$. The number of 4-edge paths in this clique is
$$\frac{1}{2}a(a-1)(a-2)(a-3)(a-4)\ge \frac{1}{2}(2e)^\frac{5}{2}(1-O(e^{-\frac{1}{2}}))\ge \frac{1}{2}c^\frac{5}{2}n^5(1-O(n^{-1})).$$

A similar (but more complicated) calculation gives that we can choose the quasi-star $S_n^e$ for $G_3$.
\end{proof}

We conclude the paper with a few open questions.

\begin{question}
We proved that either the quasi-star or the quasi-clique asymptotically maximizes the number of 4-edge paths in graphs with given edge density. Is it true that this maximum is actually exactly (not just asymptotically) achieved by either the quasi-star or the quasi-clique?
\end{question}

Theorem \ref{ahlkat} states that the above is true for 2-edge paths.

\begin{question}
Is it true for all graphs $H$ that the number of subgraphs isomorphic to $H$ in graphs with given edge density is (asymptotically) maximized by either the quasi-star or the quasi-clique?
\end{question}

It is true for 4-edge paths and $k$-edge stars, when $2\le k \le 30$ (see Theorem \ref{starthm}). When $H$ is a graph having a spanning subgraph that is a vertex-disjoint union of edges and cycles, only the quasi-clique comes into play (see Theorem \ref{alonthm}).

\begin{question}
Is it true that for every graph $H$, there is a constant $c_H<1$ such that among graphs with $n$ vertices and edge density $c>c_H$, the number of subgraphs isomorphic to $H$ is (asymptotically) maximized by the quasi-clique?
\end{question}

\noindent
{\bf Acknowledgement} I would like to thank Gyula O.H. Katona for his help with the creation of this paper.

\end{document}